\definecolor{DarkGreen}{rgb}{0.2,0.6,0.2}
\def\Om{\Omega}\def\om{\omega}
\newcolumntype{Y}{>{\centering\arraybackslash}X}
\def\wt{\widetilde}
\newcolumntype{C}{>{\centering\arraybackslash}X}
\def\ignore#1{}
\def\bR{{\mathbb R}}
\def\bP{{\mathbb P}}
\def\bE{{\mathbb E}}
\numberwithin{equation}{section}
\def\cD{{\mathscr D}}
\def\cF{{\mathscr F}}
\newtheorem{theorem}{Theorem}[section]
\newtheorem{proposition}[theorem]{Proposition}
\newtheorem{lemma}[theorem]{Lemma}
\theoremstyle{definition}
\newtheorem{definition}[theorem]{Definition}
\newtheorem{example}[theorem]{Example}
\newtheorem{remark}[theorem]{Remark}
\def\<{\langle}
\def\>{\rangle}
\def\wt#1{\widetilde{#1}}
\def\wt{\widetilde}
\begin{document}

	\title{A criterion for absolute continuity relative to the law of fractional Brownian motion}
	\author{Xiyue Han and Alexander Schied\thanks{
			Department of Statistics and Actuarial Science, University of Waterloo, 200 University Ave W, Waterloo, Ontario, N2L 3G1, Canada. E-Mails: {\tt xiyue.han@uwaterloo.ca, aschied@uwaterloo.ca}.\hfill\break
			The authors gratefully acknowledge support from the
			Natural Sciences and Engineering Research Council of Canada through grants RGPIN-2017-04054 and RGPIN-2024-03761.}}
	\date{\normalsize First version: June 5, 2023\\
	This version: November 16, 2024}
	\maketitle

\begin{abstract}Let $X$ be the sum of a fractional Brownian motion with Hurst parameter $H$ and an absolutely continuous and adapted drift process. 
We establish a  simple criterion that guarantees that the law of $X$ is absolutely continuous with respect to the law of the original fractional Brownian motion. For $H<1/2$, the trajectories of the derivative of the drift need to be bounded by an almost surely finite random variable; for $H>1/2$, they need to satisfy a H\"older condition with some exponent larger than $2H-1$. These are almost-sure conditions,  and no expectation requirements are imposed. For the case in which $X$ arises as the solution of a nonlinear stochastic integral equation driven by fractional Brownian motion, we provide a simple criterion on the drift coefficient under which the law of $X$ is automatically equivalent to the one of fractional Brownian motion. 
\end{abstract}

\medskip

\noindent\textbf{Keywords}: Absolute continuity of measures, fractional Brownian motion, fractional Ornstein--Uhlenbeck process, Kadota--Shepp Theorem

\section{Introduction and statement of the main result}

In recent decades, fractional Brownian motion has emerged as a fundamental mathematical tool  for modeling scale-dependent random processes, exhibiting long-range dependence and self-similarity. Originating from the seminal work of Beno\^it Mandelbrot in the early 1960s,  fractional Brownian motion has since captured the attention of researchers across numerous scientific disciplines due to its versatile applications in fields such as physics, finance, telecommunications, and image processing; see the monographs \cite{AscioneMishuraPirozzi,KubiliusMishura,Mishura,Nourdin2012,UmarovEtAl} and the references therein for a comprehensive account.

Fractional Brownian motion with Hurst parameter $H\in(0,1)$ is a centered Gaussian stochastic process $W^H$ with continuous sample paths and  covariance function
$$\bE[W^H_sW^H_t]=\frac12\big(|s|^{2H}+|t|^{2H}-|t-s|^{2H}\big), \qquad s,t\ge0,
$$
on a given probability space $(\Om,\cF,\bP)$. For $H=1/2$, we recover standard Brownian motion  $W=W^{1/2}$ as a special case. 

A classical research question concerns the characterization of stochastic processes $X$ whose law is absolutely continuous with respect to the one of $W^H$. Its solutions have numerous applications in the statistics of stochastic processes, the analysis of stochastic differential equations, and mathematical finance. A large portion of the two-volume monograph \cite{LiptserShiryaevI,LiptserShiryaevII} by Liptser and Shiryaev is devoted to the study of this question for standard Brownian motion $W$. It applies to the setting in which $(\Om,\cF,(\cF_t),\bP)$ is a probability space satisfying the usual conditions, $W$ is a Brownian motion relative to the filtration $(\cF_t)$, and $X$ is an It\^o process of the form
\begin{equation}\label{X Ito eq}
X_t=W_t+\int_0^t\xi_s\,ds,\qquad 0\le t\le T,
\end{equation}
for some progressively measurable  process $\xi$ with $\int_0^T\xi_t^2\,dt<\infty$ $\bP$-a.s.~and some $T>0$.  Then the traditional approach to absolute continuity uses Girsanov's theorem to obtain a candidate for the density $\varphi$ of the law of $X$ with respect to the Wiener measure in the following exponential form,
\begin{equation}
\varphi=\exp\bigg(\int_0^T\xi_t\,dW_t-\frac12\int_0^T\xi_t^2\,dt\bigg).
\end{equation}
The main difficulty with this approach is that it requires the verification of the condition $\bE[\varphi]=1$, which can be quite hard in concrete applications. As long as one only needs absolute continuity of the law and not the specific form of the density, one can rely on the following remarkable result, which was first obtained by Kadota and Shepp \cite{KadotaShepp} and later improved in \cite{Yershov,LipcerSirjaev}; it can also be found  in \cite[Theorem 7.2]{LiptserShiryaevI}. It is remarkable insofar as no integrability conditions with respect to the measure $\bP$ are imposed on the drift $\xi$.

\begin{theorem}[Kadota and Shepp  \cite{KadotaShepp}]\label{KadotaShepp thm} If $X$ is as in \eqref{X Ito eq} and $\int_0^T\xi_t^2\,dt<\infty$ $\bP$-a.s., then the law of $(X_t)_{0\le t\le T}$ is absolutely continuous with respect to the Wiener measure on $C[0,T]$.
\end{theorem}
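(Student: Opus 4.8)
The plan is to reduce the general statement, in which no integrability of the drift is available, to the classical Girsanov setting by means of a localization argument. First I would introduce the stopping times
$$\tau_n=\inf\Big\{t\in[0,T]:\int_0^t\xi_s^2\,ds\ge n\Big\}\wedge T,\qquad n\in\bN,$$
which are well defined because $t\mapsto\int_0^t\xi_s^2\,ds$ is continuous and nondecreasing. Since $\int_0^T\xi_t^2\,dt<\infty$ $\bP$-a.s., almost every $\omega$ eventually satisfies $\tau_n(\omega)=T$, so the events $\{\tau_n=T\}$ increase to $\Om$ up to a $\bP$-null set. I would then truncate the drift by setting $\xi^n_t=\xi_t\Ind{t\le\tau_n}$, which is again progressively measurable and whose energy is now \emph{deterministically} bounded, $\int_0^T(\xi^n_t)^2\,dt\le n$.

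The key point is that this deterministic bound makes Novikov's condition trivially satisfied for $\xi^n$: the exponential
$$\varphi_n=\exp\Big(-\int_0^T\xi^n_s\,dW_s-\frac12\int_0^T(\xi^n_s)^2\,ds\Big)$$
is a genuine density, $\bE[\varphi_n]=1$, and $\bP_n:=\varphi_n\,\bP$ defines a probability measure equivalent to $\bP$. By Girsanov's theorem, the process $\wh W^n_t=W_t+\int_0^t\xi^n_s\,ds$ is a Brownian motion under $\bP_n$, so the law of $\wh W^n$ under $\bP_n$ is exactly the Wiener measure on $C[0,T]$. Crucially, on the event $\{\tau_n=T\}$ one has $\xi^n\equiv\xi$ on $[0,T]$ and hence $\wh W^n=X$.

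To conclude, let $A\subseteq C[0,T]$ be Borel with Wiener measure zero. Then $\bP_n(\wh W^n\in A)=0$, and since $\bP_n$ and $\bP$ are equivalent they share the same null sets, so also $\bP(\wh W^n\in A)=0$. Therefore
$$\bP(X\in A,\,\tau_n=T)=\bP(\wh W^n\in A,\,\tau_n=T)\le\bP(\wh W^n\in A)=0,$$
and letting $n\to\infty$ and using $\{\tau_n=T\}\ua\Om$ together with continuity from below yields $\bP(X\in A)=0$, which is precisely the asserted absolute continuity. I expect the only genuinely delicate points to be the verification that Girsanov applies on all of $[0,T]$ so as to produce a true Brownian motion (which rests on the martingale property of $\varphi_n$, itself a consequence of the deterministic energy bound) and the bookkeeping that transports each localized, equivalent picture back to the single reference measure $\bP$; the limiting step $\{\tau_n=T\}\ua\Om$ is where the almost-sure finiteness of the energy, rather than any moment condition, does the essential work.
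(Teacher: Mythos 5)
Your argument is correct, but there is nothing in the paper to compare it against: the paper does not prove \Cref{KadotaShepp thm} at all. It imports the result as a black box, citing Kadota--Shepp \cite{KadotaShepp} (with later improvements in \cite{Yershov,LipcerSirjaev}) and \cite[Theorem 7.2]{LiptserShiryaevI}, and then uses it as the key ingredient in the proofs of its own \Cref{fBM KadotaShepp thm} and \Cref{fBM Yershov theorem}. What you have written is essentially the standard localization proof by which this theorem is established in the literature, and it is sound: the stopping times $\tau_n$ produce truncated drifts $\xi^n_t=\xi_t\Ind{t\le\tau_n}$ with $\int_0^T(\xi^n_t)^2\,dt\le n$ almost surely, so Novikov's condition holds trivially, $\bE\big[\exp\big(\frac12\int_0^T(\xi^n_t)^2\,dt\big)\big]\le e^{n/2}$, Girsanov applies on all of $[0,T]$, and $\wh W^n$ is a Brownian motion under the equivalent measure $\bP_n$; the pathwise identity $\wh W^n=X$ on $\{\tau_n=T\}$ together with continuity from below then transports Wiener-null sets to $\bP$-null sets of the law of $X$. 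The only bookkeeping worth making explicit is a measure-zero point: the bound $\int_0^T(\xi^n_t)^2\,dt\le n$ holds only off the null set where $t\mapsto\int_0^t\xi_s^2\,ds$ fails to be finite and continuous, but since Novikov's criterion is an expectation condition an almost-sure bound suffices (alternatively, redefine $\xi:=0$ on that null set, which changes nothing). Your closing observation captures exactly what this theorem buys the paper: the localization converts the purely almost-sure finiteness of $\int_0^T\xi_t^2\,dt$ into a usable Girsanov statement, which is why the paper can avoid verifying $\bE[\varphi]=1$ for any stochastic exponential in its own fractional setting.
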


Even more can be said in case $X$ is a process satisfying the stochastic integral equation
\begin{equation}\label{SDE}
X_t=W_t+\int_0^t\gamma(s,X)\,ds,\qquad 0\le t\le T,
\end{equation}
where $\gamma$ is progressively measurable with respect to the natural filtration of $W$. See \cite[Theorem 3]{Yershov} or \cite[Theorem 7.7]{LiptserShiryaevI} for a proof of the following result.

\begin{theorem}[Yershov \cite{Yershov}]\label{Yershov thm} If $X$  as in \eqref{SDE} is adapted to $(\cF_t)$, and $\int_0^T\gamma(t,X)^2\,dt$ and $\int_0^T\gamma(t,W)^2\,dt$ are both $\bP$-a.s.~finite, then the law of $(X_t)_{0\le t\le T}$  is equivalent to the one of $(W_t)_{0\le t\le T}$ 
\end{theorem}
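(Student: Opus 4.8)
The plan is to prove the two absolute-continuity statements $\mu_X\ll\mu_W$ and $\mu_W\ll\mu_X$ separately, where $\mu_X$ and $\mu_W$ denote the laws on $C[0,T]$ of $X$ and $W$; their conjunction is exactly the asserted equivalence. I expect the two hypotheses $\int_0^T\gamma(t,X)^2\,dt<\infty$ and $\int_0^T\gamma(t,W)^2\,dt<\infty$ to play symmetric roles, the first controlling the finiteness of the density $d\mu_X/d\mu_W$ and the second its strict positivity.

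For the first inclusion, I would simply note that $\gamma(\cdot,X)$ is progressively measurable and, by hypothesis, $\int_0^T\gamma(t,X)^2\,dt<\infty$ $\bP$-a.s. Writing $X_t=W_t+\int_0^t\gamma(s,X)\,ds$ and applying \Cref{KadotaShepp thm} with $\xi_s=\gamma(s,X)$ then yields $\mu_X\ll\mu_W$ at once. Let $\rho:=d\mu_X/d\mu_W\ge0$ be the resulting density on $C[0,T]$.

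The heart of the matter is the reverse inclusion $\mu_W\ll\mu_X$, which I would reduce to the positivity statement $\rho>0$ $\mu_W$-a.s.: a $\mu_X$-null set $A$ satisfies $\int_A\rho\,d\mu_W=0$, whence $\rho=0$ $\mu_W$-a.e.\ on $A$ and $\mu_W(A)\le\mu_W(\{\rho=0\})$, so $\mu_W(\{\rho=0\})=0$ is precisely what is needed. To prove positivity I would localise. On the canonical space with canonical process $B$, set $\sigma_n:=\inf\{t:\int_0^t\gamma(s,B)^2\,ds\ge n\}\wedge T$. On $[0,\sigma_n]$ the drift is bounded in $L^2$, so a classical Girsanov argument shows that $\mu_X$ and $\mu_W$ restricted to $\cF_{\sigma_n}$ are equivalent, with strictly positive density $Z_n=\exp(\int_0^{\sigma_n}\gamma(s,B)\,dB_s-\frac12\int_0^{\sigma_n}\gamma(s,B)^2\,ds)$ under $\mu_W$. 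It is exactly here that the second hypothesis enters: since $B\dd W$ under $\mu_W$, the condition $\int_0^T\gamma(t,W)^2\,dt<\infty$ forces $\sigma_n\ua T$ $\mu_W$-a.s., so that $Z_n=\bE_{\mu_W}[\rho\mid\cF_{\sigma_n}]$ converges $\mu_W$-a.s.\ to $\rho$ while simultaneously converging to the finite, strictly positive limit $\exp(\int_0^T\gamma(s,B)\,dB_s-\frac12\int_0^T\gamma(s,B)^2\,ds)$. Identifying the two limits gives $\rho>0$ $\mu_W$-a.s. (Symmetrically, the first hypothesis gives $\sigma_n\ua T$ $\mu_X$-a.s., which is what guarantees $\rho<\infty$.)

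I expect the reverse direction to be the main obstacle: the explicit exponential density is available only after localisation, and the delicate point is to pass to the limit $n\to\infty$ and conclude that the limiting density is strictly positive $\mu_W$-a.s.\ rather than degenerating on a set of positive $\mu_W$-measure — which is exactly what finiteness of $\int_0^T\gamma(t,W)^2\,dt$ rules out by making the localising times exhaust $[0,T]$ before the exponential can vanish. A secondary, routine but slightly technical point to verify is the identification $\mu_X|_{\cF_{\sigma_n}}=Z_n\,\mu_W|_{\cF_{\sigma_n}}$, i.e.\ that the Girsanov-transformed canonical process has, up to $\sigma_n$, the law of the correspondingly stopped solution of \eqref{SDE}.
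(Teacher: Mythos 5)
First, a point of reference: the paper does not prove this theorem at all --- it is quoted as a known result, with the proof delegated to \cite[Theorem 3]{Yershov} and \cite[Theorem 7.7]{LiptserShiryaevI}. So your proposal must be judged against the classical argument, and its architecture is indeed the classical one: $\mu_X\ll\mu_W$ follows from \Cref{KadotaShepp thm} applied to $\xi_s=\gamma(s,X)$, and $\mu_W\ll\mu_X$ is reduced to showing that $\rho=d\mu_X/d\mu_W$ is $\mu_W$-a.s.\ strictly positive, which ultimately comes from identifying $\rho$ with the exponential $\exp\bigl(\int_0^T\gamma(t,B)\,dB_t-\frac12\int_0^T\gamma(t,B)^2\,dt\bigr)$, whose exponent is $\mu_W$-a.s.\ finite exactly because of the hypothesis $\int_0^T\gamma(t,W)^2\,dt<\infty$. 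Your reduction of $\mu_W\ll\mu_X$ to the positivity of $\rho$, and your reading of the roles of the two hypotheses, are correct.

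The genuine gap is the step you dismiss as ``secondary, routine but slightly technical'': the identification $\mu_X|_{\cF_{\sigma_n}}=Z_n\,\mu_W|_{\cF_{\sigma_n}}$. This is the mathematical heart of the theorem, and ``a classical Girsanov argument'' does not deliver it. What Girsanov gives (Novikov holds after truncation, since $\int_0^{\sigma_n}\gamma(s,B)^2\,ds\le n$) is that under $\bQ_n:=Z_n\,\mu_W$ the canonical process is \emph{a} weak solution of the stopped equation. To identify $\bQ_n$ with the law of the stopped $X$ on $\cF_{\sigma_n}$ you need uniqueness in law for the stopped equation, and for a merely progressively measurable path functional $\gamma$ this is not an off-the-shelf fact --- it is essentially a consequence of the very theorem being proved, so invoking it silently is close to circular. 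Closing the gap requires the one idea your write-up never isolates, namely where the diffusion-type structure (drift a functional of the solution path, in contrast to the general adapted drift of \Cref{KadotaShepp thm}) enters the reverse direction: for \emph{any} solution $Y$ of the stopped equation, the inverse Girsanov density can be rewritten as $\exp\bigl(\int_0^T\gamma_n(s,Y)\,dY_s-\frac12\int_0^T\gamma_n(s,Y)^2\,ds\bigr)$ with $\gamma_n(s,y)=\gamma(s,y)\mathbf{1}_{\{s\le\sigma_n(y)\}}$, i.e.\ as a functional of the path of $Y$ alone; since $Y$ has Wiener law under the transformed measure, this exhibits the law of $Y$ as $Z_n\,\mu_W$, whichever solution $Y$ one started from. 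One then applies this to $X^{(n)}_t:=W_t+\int_0^{t\wedge\sigma_n(X)}\gamma(s,X)\,ds$, checking that $X^{(n)}$ solves the stopped equation and that $X$ and $X^{(n)}$ induce the same measure on $\cF_{\sigma_n}$ (Galmarino's test). Alternatively --- and this is the route of \cite[Theorem 7.7]{LiptserShiryaevI} --- one avoids the identification problem altogether: represent the $\mu_W$-martingale $\rho_t=\bE_{\mu_W}[\rho\mid\cF_t]$ as an It\^o integral, identify its integrand as $\rho_t\gamma(t,B)$ from the uniqueness of the semimartingale decomposition of the canonical process under $\mu_X$, solve the resulting Dol\'eans equation for $\rho_t$ up to the first zero of $\rho$, and use $\int_0^T\gamma(t,B)^2\,dt<\infty$ $\mu_W$-a.s.\ to show that this zero is $\mu_W$-a.s.\ never reached. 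Either way, the substance you would need to supply sits precisely at the point you set aside as routine.
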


In the present paper, our goal is to obtain sufficient conditions for the absolute continuity of the stochastic process
\begin{equation}\label{XH eq}
X_t:=x_0+W^H_t+\int_0^t\xi_s\,ds,\qquad0\le t\le T,\end{equation}
for the case in which the Hurst parameter $H$ is not equal to $1/2$ and $x_0\in\bR$. Here, we assume that $\xi$ is progressively measurable with respect to the filtration $(\cF_t)$.  If, in addition, the drift $\xi$ is of the form $\xi_t=b(X_t)$ for some function $b:\bR\to\bR$, our aim is to establish sufficient conditions for the equivalence of the law of $X$ to the one of $x_0+W^H$.
An important special case is the fractional Ornstein--Uhlenbeck process, which can be defined as the pathwise solution of the following integral equation 
\begin{equation}\label{OU process eq}
X_t=x_0+\rho\int_0^t(m-X_s)\,ds+W^H_t,\qquad t\ge0,
\end{equation}
where $\rho$ and $m$ are real parameters; see \cite{CheriditoFOU}. 

There exists also a Girsanov-type theorem for fractional Brownian motion. It was developed by Norros, Valkeila, and Virtamo \cite{Norrosetal1999} and works by transforming $W^H$ into a certain standard Brownian motion, to which one can then apply the traditional Girsanov theorem. However, in this context, verifying the condition $\bE[\varphi]=1$ for the corresponding stochastic exponential becomes even more involved, because  the drift process also needs to be transformed in an intricate manner. For this reason, we believe that easily verifiable criteria for absolute continuity may come in handy in many applications. 

Before stating our main results, however, we need to specify our requirements on the filtration $(\cF_t)$, with respect to which both $W^H$ and $\xi$ are adapted. Recall that in the Brownian case, $H=1/2$, the condition required for \Cref{KadotaShepp thm} is that $W$ is a Brownian motion relative to $(\cF_t)$. That is, the increment  $W_t-W_s$ is independent of $\cF_s$ for $0\le s<t$. Since the increments of fractional Brownian motion $W^H$ for $H\neq1/2$ are not independent, the preceding condition cannot be translated verbatim. Instead, we propose the following \Cref{Wh rel Ft def}, based on the process
\begin{equation}\label{wt M eq}
\wt M_t:=\int_0^ts^{1/2-H}(t-s)^{1/2-H}\,dW^H_s,\qquad 0\le t\le T.
\end{equation}
It was shown in \cite{Norrosetal1999} that the stochastic integral on the right exists $\bP$-a.s.~in a pathwise sense and that $\wt M$ is a Gaussian martingale and hence has independent increments. Clearly, $\wt M=W^H=W$ for $H=1/2$. 

\begin{definition}\label{Wh rel Ft def} We say that $W^H$ is a \emph{fractional Brownian motion relative to the filtration $(\cF_t)$} if $W^H$ is adapted to $(\cF_t)$ and the increment  $\wt M_t-\wt M_s$ is independent of $\cF_s$ for $0\le s<t$.
\end{definition}

Throughout the remainder of the paper, we will always assume that $W^H$ is a fractional Brownian motion relative to the given filtration $(\cF_t)$. Since 
it follows from \cite[Proposition 3.1]{Norrosetal1999} that   $W^H$ is a fractional Brownian motion with respect to its natural filtration, the assumption includes the case in which $(\cF_t)$ is the natural filtration of $W^H$. 
Now we can state our main result.

\begin{theorem}\label{fBM KadotaShepp thm} Let $X$ be given by \eqref{XH eq} and suppose  the following assumption is satisfied.
\begin{itemize}
\item If $H<1/2$, we assume  that the function $t\mapsto\xi_t$ is $\bP$-a.s.~bounded in the sense that  there exists a finite random variable $C$ such that $|\xi_t(\om)|\le C(\om)$ for a.e.~$t$ and $\bP$-a.e.~$\om$. 
\item If $H>1/2$, we assume that for $\bP$-a.s.~$\om$ the function $t\mapsto \xi_t(\om)$ is H\"older continuous with some exponent $\alpha(\om)>2H-1$.
\end{itemize}
 Then the law of $(X_t)_{t\in[0,T]}$ is absolutely continuous with respect to the law of $(x_0+W^H_t)_{t\in[0,T]}$.
\end{theorem}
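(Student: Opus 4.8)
The plan is to reduce the statement to the classical Kadota--Shepp theorem (\autoref{KadotaShepp thm}) by transforming $W^H$ into a standard Brownian motion via the Molchan--Golosov / Norros--Valkeila--Virtamo representation. Recall that there is a deterministic Volterra kernel $K_H(t,s)$, supported on $\{s\le t\}$, and a standard Brownian motion $B$ generating the same filtration as $W^H$, such that $W^H_t=\int_0^t K_H(t,s)\,dB_s$; equivalently one may work with the fundamental Gaussian martingale $M_t=\int_0^t\kappa_H(t,s)\,dW^H_s$, where $\kappa_H(t,s)=\kappa_H\,s^{1/2-H}(t-s)^{1/2-H}$ and $\langle M\rangle_t=\lambda_H t^{2-2H}$. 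Writing $\Phi(t):=\int_0^t\xi_s\,ds$ for the drift of $X$ relative to $W^H$, the idea is to produce a drifted Brownian motion $\widetilde B_t=B_t+\int_0^t\eta_s\,ds$ whose image under the same representation map is exactly $X$. This forces $\int_0^t K_H(t,s)\eta_s\,ds=\Phi(t)$, i.e.\ $\eta=K_H^{-1}\Phi$ for the explicit inverse operator $K_H^{-1}$. Since the representation map is a fixed measurable bijection applied identically to $B$ and to $\widetilde B$, absolute continuity of the law of $\widetilde B$ with respect to Wiener measure transfers verbatim to absolute continuity of the law of $X$ with respect to that of $x_0+W^H$.

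To invoke \autoref{KadotaShepp thm} for $\widetilde B$, two points must be checked. First, $\eta$ must be progressively measurable; this holds because $K_H^{-1}$ is again a causal (lower-triangular Volterra) operator, so $\eta_t$ depends only on $(\xi_s)_{s\le t}$, and the filtrations of $W^H$ and $B$ coincide. Second, and this is the crux, I must verify $\int_0^T\eta_t^2\,dt<\infty$ $\bP$-a.s. Using the explicit fractional-calculus form of $K_H^{-1}$, the transformed drift is, up to constants and power weights, a fractional integro-differential transform of $\xi$ of order $1/2-H$: for $H<1/2$ this is a genuine fractional integral (order $1/2-H>0$) and hence smoothing, whereas for $H>1/2$ it is a fractional derivative (order $H-1/2>0$) and hence roughening. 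Equivalently, finiteness of $\int_0^T\eta_t^2\,dt$ is exactly finiteness of the squared Cameron--Martin norm $\|\Phi\|_{\mathscr H}^2$ of the drift, which in the fundamental-martingale parametrization takes the weighted form $\int_0^T (Q_t')^2\,t^{2H-1}\,dt$ with $Q_t=\int_0^t\kappa_H(t,s)\xi_s\,ds$.

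This regularity estimate is where the hypotheses enter and where the main work lies. For $H<1/2$ the fractional integral maps bounded functions to bounded functions on $[0,T]$, so the pathwise upper bound $\xi_t\le C$ controls the positive part of $\eta$ directly; the negative excursions of $\xi$ are only $L^1$ (they are integrable because $\Phi(T)$ is finite), so the delicate point is to exploit the one-sided bound together with the positivity of the kernel to retain square-integrability of $\eta$. For $H>1/2$ the fractional derivative of order $H-1/2$ exists pointwise as soon as $\xi$ is H\"older with some exponent exceeding $H-1/2$, but carrying this through the weighted norm $\int_0^T (Q_t')^2 t^{2H-1}\,dt$, with the singular behaviour inherited from the time change $\langle M\rangle_t=\lambda_H t^{2-2H}$ and the endpoint at $t=0$, is what raises the admissible exponent to the stated threshold $\alpha>2H-1$: the Marchaud-difference part of the derivative contributes a term of order $t^{\alpha-(2H-1)}$, and tracking its interaction with the weight is the heart of the computation.

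I expect the main obstacle to be precisely this sharp weighted estimate: obtaining the exponent $2H-1$ (rather than the naive $H-1/2$ sufficient for mere pointwise existence) for $H>1/2$, and, for $H<1/2$, upgrading the one-sided boundedness of $\xi$ to square-integrability of $\eta$ despite the diagonal singularity of the inverse kernel. The representation and transfer steps are, by contrast, routine once the kernel identities and the coincidence of filtrations are in place.
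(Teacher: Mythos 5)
Your setup is exactly the paper's: transform $W^H$ into the standard Brownian motion $B$ via the Norros--Valkeila--Virtamo kernels, apply the same causal transform to the drift to obtain $B^X_t=B_t+\beta_t$, invoke \Cref{KadotaShepp thm} for $B^X$, and transfer absolute continuity back through the pathwise bijection between $B$ and $W^H$. The transfer and measurability points you call routine are indeed routine. The problem is that your proposal ends where the proof has to begin: you never establish $\int_0^T(\beta'_t)^2\,dt<\infty$ $\bP$-a.s.\ (nor even that $\beta$ is absolutely continuous, nor that the inverse transform of this drift really reproduces $\int_0^t\xi_s\,ds$); you only describe these as ``where the main work lies'' and ``the main obstacle.'' Identifying the crux is not proving it, and every hypothesis of the theorem is consumed precisely by this step. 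In the paper this is \Cref{eta lemma}: for $H<1/2$ one differentiates the Abel transform \eqref{Abel eq for eta} to get $\mu'_t=c_1(1/2-H)\int_0^t(t-s)^{-H-1/2}\eta'_s\,ds$, which is bounded because $\eta'_s=s^{1/2-H}\xi_s$ is; for $H>1/2$ one writes $\mu=c_1\Gamma(3/2-H)\,I_{0+}^{3/2-H}\wt\xi$ with $\wt\xi_t=t^{1/2-H}\xi_t$ and uses fractional calculus (\cite[Lemmas 13.1 and 13.2, Theorems 2.3 and 2.4]{SamkoKilbasMarichev}) to conclude that $\mu$ is absolutely continuous with $\mu'_t=c_1\xi_t+\psi(t)$ H\"older, hence bounded; the lemma also proves the inversion formula \eqref{eta analogy}, which is what justifies your assertion that the drifted Brownian motion maps back onto $X$. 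Once $\mu'$ is bounded, finiteness of $\int_0^T t^{2H-1}(\mu'_t)^2\,dt$ is immediate. None of this appears in your proposal.

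Moreover, your diagnosis of where the threshold $\alpha>2H-1$ comes from is off, which would matter if you tried to execute your plan. The outer weight $t^{2H-1}$ is bounded on $[0,T]$ for $H>1/2$ (and integrable for $H<1/2$), so no ``sharp weighted estimate'' against it is needed once $\mu'$ is bounded. The threshold arises earlier: the kernel multiplies $\xi_s$ by the negative power $s^{1/2-H}$, which degrades the H\"older exponent from $\alpha$ to $\alpha+1/2-H$ (\Cref{Hoelder lemma}, which requires the normalization $\xi_0=0$ --- a reduction your proposal omits and which the paper arranges via \eqref{x0=0 eq} and the equivalence of the laws of $W^H$ and $(W^H_t+\xi_0 t)$). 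One then needs this degraded exponent to exceed $H-1/2$, i.e.\ $\alpha>2H-1$, so that $\wt\xi\in I_{0+}^{H-1/2}(L^1[0,T])$ and the fractional derivative $\mu'$ exists with a bounded representative. Finally, your worry in the case $H<1/2$ about the one-sided bound (``negative excursions only $L^1$'') is likewise left unresolved in your text; the paper's own argument simply uses boundedness of $\eta'$, so if you insist on the literal one-sided hypothesis you would need an argument the paper does not contain. As written, your proposal is the plan of the proof, not the proof.
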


 We emphasize that  in \Cref{fBM KadotaShepp thm} no integrability conditions are imposed on $\xi$ or the random variable $C$. Nevertheless, as illustrated by \Cref{KadotaShepp thm}  for $H=1/2$, the conditions in \Cref{fBM KadotaShepp thm}  may not be  optimal and can probably be improved. However, our goal was not to state and prove the most general theorem on absolute continuity, but to provide conditions for absolute continuity that can be verified with minimal effort and that apply in the most important applications. We believe this is the case for the conditions presented  here, and we illustrate this point by means of the following example and the subsequent result.

\begin{example}\label{Hurst ex}
The motivation for this research originated in the context of  \cite{HanSchiedHurstDerivative}, where we prove the strong consistency of a statistical  estimator  that estimates the parameter $H$ from discrete observations of the process
$$Y^{W^H}_t:=\int_0^tg(W^H_s)\,ds,\qquad 0\le t\le T,
$$
where $g\in C^2(\bR)$ is strictly monotone. This problem  arises from the desire to estimate the \lq roughness' of a rough stochastic volatility model (see, e.g.,  \cite{GatheralRosenbaum}) from its realized quadratic variation. Rough stochastic volatility models  often involve an additional drift, i.e., they are based on $X$ as in \eqref{XH eq} or \eqref{OU process eq} instead of $W^H$ and so one will observe the process
$$Y^X_t:=\int_0^tg(X_s)\,ds,\qquad 0\le t\le T. 
$$
If we know that the law of $X$ is absolutely continuous with respect to the law of $W^H$ for every given $H\in(0,1)$, then we can conclude immediately that the strong consistency of the estimator carries over from the process $Y^{W^H}$ to the process $Y^X$. Clearly, analogous statements hold for other strongly consistent statistical estimators in the context of fractional Brownian motion; see, e.g.,  \cite{KubiliusMishura} for examples. 
\end{example}
 
 The next theorem  applies in particular to the fractional Ornstein--Uhlenbeck process \eqref{OU process eq}.
 It is basically a corollary to \Cref{Yershov thm} and our proof of \Cref{fBM KadotaShepp thm}.
 
 \begin{theorem}\label{fBM Yershov theorem}Suppose that $X$ solves the following integral equation,
 \begin{equation}\label{fSDE eq}
 X_t=x_0+W^H_t+\int_0^tb(X_s)\,ds,\qquad 0\le t\le T,
 \end{equation}
 for some locally bounded and measurable function  $b:\bR\to\bR$ and that $X$ is adapted to $(\cF_t)$. For $H>1/2$ we assume in addition that $b$ is locally H\"older continuous with some exponent $\alpha>2-1/H$.  Then the law of $(X_t)_{t\in[0,T]}$ is equivalent to the law of $(x_0+W^H_t)_{t\in[0,T]}$.
 \end{theorem}
 
 A criterion for the existence of the weak solution to the nonlinear integral equation \eqref{fSDE eq} in the case $H\le1/2$ is given in \cite{Anzeletti}.

\begin{remark}\label{density remark 1}
  As a matter of fact, in the context of \Cref{fBM Yershov theorem} it is even possible to provide a formula for the density between the two laws. This formula, however, 
involves a number of quantities that will only be introduced in the proof of \Cref{fBM KadotaShepp thm}. For this reason, it is deferred to \Cref{density remark 2}
 \end{remark}
 
 \begin{example}\label{arbitrage ex} Cheridito \cite{CheriditoArbitrage} showed that a financial market model based on fractional Brownian motion with deterministic drift does not admit arbitrage if trading is restricted to a certain class of strategies with finitely many rebalancing times in $[0,T]$. Since the absence of arbitrage is preserved under an equivalent measure change, \Cref{fBM Yershov theorem} shows that this no-arbitrage result carries over to market models built on a process of the form \eqref{fSDE eq}.
 \end{example}

\section{Proofs}
\subsection{Proof of \Cref{fBM KadotaShepp thm}}

In this  section, we prove \Cref{fBM KadotaShepp thm}. We first note that we may clearly assume without loss of generality that $x_0=0$. Our approach to proving \Cref{fBM KadotaShepp thm} combines \Cref{KadotaShepp thm} with 
the transformation theory developed by Norros et al.~\cite{Norrosetal1999}.
  Drawing on the latter reference, we define the following constants,
$$c_H=\sqrt{\frac{2H\Gamma(3/2-H)}{\Gamma(H+1/2)\Gamma(2-2H)}},\quad c_1=\frac1{2H\Gamma(\frac32-H)\Gamma(H+\frac12)},\quad c_2=\frac{c_H}{2H(2-2H)^{1/2}}.
$$
We also let
\begin{align*}
w(t,s)&=c_1s^{1/2-H}(t-s)^{1/2-H},\qquad 0\le s\le t\le T.
\end{align*}
It is shown in  \cite{Norrosetal1999} that the following integrals exist $\bP$-a.s.~in a pathwise sense,
\begin{equation}\label{M and Y eq}
Y_t=\int_0^ts^{1/2-H}\,dW^H_s\quad\text{and}\quad  M_t=\int_0^tw(t,s)\,dW^H_s.
\end{equation}
Note that $M=c_1\wt M$, where $\wt M$ is as in \eqref{wt M eq}.
 Theorem 3.1 in \cite{Norrosetal1999}  states that $M$ is a  continuous Gaussian martingale with quadratic variation $\<M\>_t=c_2^2t^{2-2H}$. Since
$$\int_0^t(s^{H-1/2})^2\,d\<M\>_s=c_2^2(2-2H)t=\frac{c_H^2t}{4H^2},
$$
 the process
\begin{equation}\label{BM B def}
B_t:=\frac{2H}{c_H}\int_0^ts^{H-1/2}\,dM_s
\end{equation}
is a standard Brownian motion. Theorem 5.2 in \cite{Norrosetal1999} states that $W^H$ can be recovered from $B$ as follows,
\begin{equation}\label{W from B eq}
W^H_t=\int_0^t\zeta(t,s)\,dB_s,
\end{equation}
where for $0\le s<t\le T$,
\begin{equation*}
\zeta(t,s)=c_H\bigg[\Big(\frac ts\Big)^{H-1/2}(t-s)^{H-1/2}-(H-1/2)s^{1/2-H}\int_s^tu^{H-3/2}(u-s)^{H-1/2}\,du\bigg] .
\end{equation*}
It follows that there is a one-to-one correspondence between $W^H$ and $B$. 
The proof of the identity \eqref{W from B eq} is based on the following two relations, which are derived   in Theorem 3.2 and Remark 3.1 of  \cite{Norrosetal1999}, \begin{equation}\label{M and Y relations eq}
M_t=c_1\int_0^t(t-s)^{1/2-H}\,dY_s\quad\text{and}\quad  Y_t=2H\int_0^t(t-s)^{H-1/2}\,dM_s.
\end{equation}

Our goal is to extend these arguments to the process $X$. We start with the following simple lemma, which extends  \cite[Theorem 4.1]{Norrosetal1999} to the case of a constant, $\cF_0$-measurable drift.

\begin{lemma}\label{xi0 lemma} Let $\xi_0$ be a finite $\cF_0$-measurable random variable and define
$$\varphi=\exp\Big(\xi_0M_T-\frac{\xi_0^2}{2}\<M\>_T\Big).
$$
Then $d\bP_{\xi_0}:=\varphi\,d\bP$ defined a probability measure $\bP_{\xi_0}$ under which $W^H$ is a fractional Brownian motion with constant drift $\xi_0$. That is, the law of $(W^H_t)_{0 \le t \le T}$ under $\bP_{\xi_0}$ is the same as that of $(W^H_t+\xi_0t)_{0\le t\le T}$ under $\bP$.
\end{lemma}

\begin{proof} Our assumption that $W^H$ is a fractional Brownian motion with respect to $(\cF_t)$ implies that the increments of $M$, and hence $M$ itself, are independent of $\cF_0$. Thus, $\xi_0$ and $M$ are independent, which gives $\bE[\varphi]=\bE[\bE[\varphi|\cF_0]]=1$. Thus, $\bP_{\xi_0}$ is indeed a probability measure. 

Next, it follows from \eqref{BM B def} and \eqref{W from B eq} that $W^H$ is independent of $\cF_0$. Moreover, it was shown in  \cite[Theorem 4.1]{Norrosetal1999} that the assertion holds if $\xi_0$ is a constant. Thus, by putting these two facts together and conditioning on $\cF_0$, one  sees that the assertion also holds for $\cF_0$-measurable $\xi_0$.
\end{proof}

The preceding lemma allows us to assume without loss of generality that $\xi_0=0$ $\bP$-a.s. To see why, we  rewrite \eqref{XH eq}  as
\begin{equation}\label{x0=0 eq}
	X_t = \left(W^H_t +\xi_0 t\right) + \int_{0}^{t}(\xi_s - \xi_0)\,ds.
\end{equation}
\Cref{xi0 lemma} implies that the law of $W^H$ is equivalent to the one of $(W^H_t + \xi_0 t)_{0\le t\le T}$.  Since absolute continuity is a transitive relation, our claim follows.

In the sequel, we shall work in a strictly pathwise manner for a fixed sample path $\xi$ that satisfies our assumptions. Under these assumptions, we may define  in analogy to \eqref{M and Y eq},
\begin{equation*}
\eta_t=\int_0^ts^{1/2-H}\xi_s\,ds\quad\text{and}\quad  \mu_t=\int_0^tw(t,s)\xi_s\,ds.
\end{equation*}
The function $\eta$ is clearly absolutely continuous with  progressively measurable derivative $\eta'_s=s^{1/2-H}\xi_s$, and so we have
\begin{equation}\label{Abel eq for eta}
\mu_t=c_1\int_0^t(t-s)^{1/2-H}s^{1/2-H}\xi_s\,ds=c_1\int_0^t(t-s)^{1/2-H}\eta'_s\,ds,
\end{equation}
which is the analogue of the first relation in \eqref{M and Y relations eq}. An analogue of the second relation in \eqref{M and Y relations eq} is proved in the following lemma. 

\begin{lemma}\label{eta lemma} Under the assumptions of \Cref{fBM KadotaShepp thm} and the additional assumption $\xi_0=0$, the function $t\mapsto \mu_t$ is absolutely continuous with bounded and progressively measurable derivative $\mu'_t$, and we have
\begin{equation}\label{eta analogy}
\eta_t=2H\int_0^t(t-s)^{H-1/2}\mu'_s\,ds.
\end{equation}
\end{lemma}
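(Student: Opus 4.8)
The plan is to read the relation \eqref{Abel eq for eta} as a Riemann--Liouville fractional integral: up to the factor $c_1\Gamma(3/2-H)$, the path $\mu$ is the fractional integral of order $3/2-H$ of the function $\eta'$, where $\eta'_s=s^{1/2-H}\xi_s$ as in \eqref{eta mu def eq}. Since \eqref{eta analogy} is the pathwise counterpart of the operator identity relating $Y$ and $M$, the natural route is to invert this fractional integral and then to match the normalizing constant against the definitions of $c_1$ and $c_H$. Because the order $3/2-H$ lies on opposite sides of $1$ according to whether $H<1/2$ or $H>1/2$, I would split into these two cases, mirroring the hypotheses of \Cref{fBM KadotaShepp thm}.

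For $H<1/2$ the exponent $1/2-H$ is positive and $\eta'$ is bounded, since $|\eta'_s|\le C s^{1/2-H}\le CT^{1/2-H}$. First I would differentiate \eqref{Abel eq for eta} by a difference-quotient argument, justified by dominated convergence because $\eta'$ is bounded, to show that $\mu$ is absolutely continuous with $\mu'_t=c_1(1/2-H)\int_0^t(t-s)^{-1/2-H}\eta'_s\,ds$; the exponent $-1/2-H\in(-1,-1/2)$ keeps the kernel integrable, and $\int_0^t(t-s)^{-1/2-H}\,ds=t^{1/2-H}/(1/2-H)$ shows $\mu'$ is bounded on $[0,T]$. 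Substituting this expression into the right-hand side of \eqref{eta analogy} and interchanging the order of integration by Fubini, the inner integral $\int_r^t(t-s)^{H-1/2}(s-r)^{-1/2-H}\,ds$ reduces, after the substitution $s=r+(t-r)v$, to Euler's Beta integral and equals $\Gamma(H+1/2)\Gamma(1/2-H)$ independently of $r$ and $t$. What remains is the bookkeeping $2Hc_1(1/2-H)\Gamma(H+1/2)\Gamma(1/2-H)=1$, which follows from $c_1=1/(2H\Gamma(3/2-H)\Gamma(H+1/2))$ together with $\Gamma(3/2-H)=(1/2-H)\Gamma(1/2-H)$.

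For $H>1/2$ the kernel in \eqref{Abel eq for eta} is singular, so $\mu$ is a fractional integral of order $3/2-H\in(1/2,1)$ and $\mu'$ is a genuine fractional derivative of $\eta'$. Establishing that $\mu$ is nonetheless $C^1$ with bounded derivative is where I expect the real work to be: I would prove that $\eta'$ is H\"older continuous on $[0,T]$ with exponent $\alpha+1/2-H$. Writing $\eta'_s-\eta'_r=s^{1/2-H}(\xi_s-\xi_r)+(s^{1/2-H}-r^{1/2-H})\xi_r$ and using $\xi_0=0$, so that $|\xi_r|\le K r^\alpha$, together with the $\alpha$-H\"older bound on $\xi$, both terms are controlled by a constant times $|s-r|^{\alpha+1/2-H}$; the delicate point is the weight $s^{1/2-H}$, which blows up at the origin and has to be absorbed by splitting according to whether $r\ge s-r$ or $r<s-r$. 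The hypothesis $\alpha>2H-1$ is exactly what yields $\alpha+1/2-H>H-1/2$, so that $\eta'$ is more than $(H-1/2)$-H\"older and its fractional derivative of order $H-1/2$ exists and is bounded, giving the asserted regularity of $\mu$.

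To obtain \eqref{eta analogy} itself for $H>1/2$, I would avoid computing $\mu'$ explicitly. Applying the fractional integral of order $H-1/2$ to \eqref{Abel eq for eta} and using the semigroup law (now with both orders $H-1/2$ and $3/2-H$ positive, so that it applies to the integrable function $\eta'$ and composes to order one) gives $\eta_t=\big(c_1\Gamma(3/2-H)\Gamma(H-1/2)\big)^{-1}\int_0^t(t-s)^{H-3/2}\mu_s\,ds$. An integration by parts, legitimate since $\mu\in C^1$ with $\mu_0=0$ and with the boundary terms vanishing because $H-1/2>0$, transfers the derivative onto $\mu$ and converts the kernel to $(t-s)^{H-1/2}$; the surviving constant collapses through $(H-1/2)\Gamma(H-1/2)=\Gamma(H+1/2)$ and $c_1\Gamma(3/2-H)\Gamma(H+1/2)=1/(2H)$ to exactly $2H$, which is \eqref{eta analogy}. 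Thus the only genuinely technical step I anticipate is the H\"older estimate for $\eta'$ when $H>1/2$; the remainder is fractional-calculus bookkeeping and the matching of the constants $c_1$, $c_H$.
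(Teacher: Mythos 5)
Your proof is correct, and its architecture coincides with the paper's: for $H<1/2$ you perform the identical computation (the explicit formula for $\mu'$, Fubini, the Beta integral $\Gamma(1/2+H)\Gamma(1/2-H)$, and the same constant check), and for $H>1/2$ your key regularity input --- that $\eta'_t=t^{1/2-H}\xi_t$ is H\"older continuous with exponent $\alpha+1/2-H>H-1/2$ and vanishes at $0$, whence $\mu$ is absolutely continuous with bounded derivative --- is exactly the paper's route; the paper isolates the H\"older claim as \Cref{Hoelder lemma} (proved by essentially the decomposition and splitting you sketch) and obtains the regularity and boundedness of $\mu'$ from \cite[Lemmas 13.1, 13.2 and Theorem 2.3]{SamkoKilbasMarichev}. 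The one step where you genuinely deviate is the derivation of \eqref{eta analogy} for $H>1/2$: the paper inverts the fractional integral via \cite[Theorem 2.4]{SamkoKilbasMarichev} to write $c_1\Gamma(3/2-H)\wt\xi_t=I^{H-1/2}_{0+}\mu'(t)$ and then integrates in $t$ using Fubini, whereas you apply $I^{H-1/2}_{0+}$ to \eqref{Abel eq for eta}, collapse $I^{H-1/2}_{0+}I^{3/2-H}_{0+}$ into $I^{1}_{0+}$ by the semigroup law, and then integrate by parts (using $\mu_0=0$, the absolute continuity of $\mu$, and $H>1/2$ to kill the boundary terms) to trade $\mu$ for $\mu'$. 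Both are valid and yield the same constant $2H$; your variant is marginally more self-contained at this step, since the semigroup law is elementary (Fubini plus a Beta integral) while the inversion theorem is a heavier citation --- but it buys nothing on the regularity side, where you still need the same fractional-calculus facts (membership of $\wt\xi$ in $I^{H-1/2}_{0+}(L^1[0,T])$ and boundedness of its fractional derivative) that the paper takes from \cite{SamkoKilbasMarichev}.
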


\begin{proof}We start with the case $H<1/2$, in which the derivative $\eta'$ is bounded. Equation \eqref{Abel eq for eta}, our assumption $H<1/2$, and the boundedness of $\eta_t'$ imply that  $t\mapsto \mu_t$ is also absolutely continuous with bounded and progressively measurable derivative
$$\mu_t'=c_1(1/2-H)\int_0^t(t-s)^{-H-1/2}\eta'_s\,ds.
$$
Thus, 
\begin{align*}
\int_0^t(t-s)^{H-1/2}\mu'_s\,ds&=c_1(1/2-H)\int_0^t(t-s)^{H-1/2}\int_0^s(s-r)^{-H-1/2}\eta'_r\,dr\,ds\\
&=c_1(1/2-H)\int_0^t\eta_r'\int_r^t(t-s)^{H-1/2}(s-r)^{-H-1/2}\,ds\,dr.
\end{align*}
After the change of variables $s=r+u(t-r)$, the inner integral becomes 
$$(t-r)^{1+H-1/2-H-1/2}\int_0^1(1-u)^{H-1/2}u^{-H-1/2}\,du=\Gamma(1/2+H)\Gamma(1/2-H).
$$
This gives \eqref{eta analogy}
in case $H<1/2$.

For $H>1/2$, recall that  we assume that  $\xi_t$ is H\"older continuous with some exponent $\alpha>2H-1$.  
 Let $ I_{0+}^{\beta}$ denote the fractional Riemann--Liouville integral operator of order $\beta>0$, i.e.,
$$I_{0+}^\beta f(t)=\frac1{\Gamma(\beta)}\int_0^t\frac{f(s)}{(t-s)^{1-\beta}}\,ds,\qquad t\in(0, T ],\ f\in L^1[0, T ].
$$
Furthermore, the fractional Riemann--Liouville derivative of order  $\beta\in(0,1)$ is given by
$$\cD^\beta_{0+}f(t)=\frac1{\Gamma(1-\beta)}\frac{d}{dt}\int_0^t\frac{f(s)}{(t-s)^\beta}\,ds,\qquad 0<t< T ,
$$
provided the expression on the right makes sense. 
When defining $\mu_t$ as in \eqref{Abel eq for eta} and letting  $\wt\xi_t:=t^{1/2-H}\xi_t$, we can write
\begin{equation}\label{mu frac int eq}
\mu_t=c_1\Gamma(3/2-H)I^{3/2-H}_{0+}\wt\xi(t).
\end{equation}
By \Cref{Hoelder lemma}, $\wt\xi_t$ is H\"older continuous with exponent $\lambda:=\alpha+1/2-H>H-1/2$. It follows from \cite[Lemma 13.2]{SamkoKilbasMarichev} that $\wt\xi\in  I_{0+}^{H-1/2}(L^1[0, T ])$. In addition, \eqref{mu frac int eq} and \cite[Theorem 2.3]{SamkoKilbasMarichev} yield that $\mu_t$ is absolutely continuous. Its derivative is given by 
\begin{equation}\label{muprime eq}
\mu'_t=c_1\frac{d}{dt}\int_0^t\frac{\wt\xi_s}{(t-s)^{H-1/2}}\,ds=c_1\Gamma(3/2-H)\cD_{0+}^{H-1/2}\wt \xi(t).
\end{equation}
To see that $\mu'$ is bounded, we use \cite[Lemma 13.1]{SamkoKilbasMarichev} to conclude that there exists a function $\psi$ that is H\"older continuous with exponent $\lambda-(H-1/2)>0$ such that 
$$\mu'_t=c_1\Gamma(3/2-H)
\cD_{0+}^{H-1/2}\wt\xi(t)=\frac{c_1\wt\xi_t}{t^{H-1/2}}+\psi(t)=c_1\xi_t+\psi(t).$$
Hence $\mu'_t$ is H\"older continuous and in particular bounded. Moreover, $\mu'_t$ is adapted by \eqref{muprime eq} and, hence, progressively measurable due to its continuity. 

By \cite[Theorem 2.4]{SamkoKilbasMarichev}, we can recover $\wt\xi$ from $\mu'$ as follows,
\begin{equation*}
c_1\Gamma(3/2-H)\wt\xi_t=I^{H-1/2}_{0+}\mu'(t)=\frac1{\Gamma(H-1/2)}\int_0^t\frac{\mu'_s}{(t-s)^{3/2-H}}\,ds.
\end{equation*}
Integrating with respect to $t$ and using Fubini's theorem yields,
\begin{align*}
\eta_t&=\int_0^t\wt\xi_s\,ds=\frac{1}{c_1\Gamma(3/2-H)\Gamma(H-1/2)}\int_0^t\int_0^r\frac{\mu'_s}{(r-s)^{3/2-H}}\,ds\,dr\\
&=\frac{1}{c_1\Gamma(3/2-H)\Gamma(H-1/2)}\cdot\frac1{H-1/2}\int_0^t(t-s)^{H-1/2}\mu'_s\,ds.
\end{align*}
This is \eqref{eta analogy} in our case $H>1/2$, because one checks that the constant in front of the integral is equal to $2H$.
\end{proof}

Let us now continue the proof of \Cref{fBM KadotaShepp thm}. \Cref{eta lemma}  allows us to make the following definition, which is analogous to  \eqref{BM B def},
\begin{equation}\label{beta def}
\beta_t:=\frac{2H}{c_H}\int_0^ts^{H-1/2}\mu'_s\,ds.
\end{equation}
By using the two identities \eqref{Abel eq for eta}
and \eqref{eta analogy} we can argue exactly as in the proof of 
  \cite[ Theorem 5.2]{Norrosetal1999} to get that 
\begin{equation}\label{beta to xi eq}
\int_0^t\xi_s\,ds=\int_0^t\zeta(t,s)\beta'_s\,ds.
\end{equation}

From  \eqref{M and Y relations eq} and \Cref{eta lemma} we now get the pathwise identity
$$Y^X_t:=\int_0^ts^{1/2-H}\,dX_s=Y_t+\eta_t.
$$
 Furthermore, we have
$$M^X_t:=\int_0^tw(t,s)\,dY^X_s=M_t+\mu_t,
$$
which shows that $M^X$ is a semimartingale with decomposition $M^X_t=M_t+\mu_t$.
Finally, we let
\begin{equation}\label{BX eq}
B^X_t:=\frac{2H}{c_H}\int_0^ts^{H-1/2}\,dM^X_s=B_t+\beta_t.
\end{equation}
By \eqref{W from B eq} and \eqref{beta to xi eq}, we have 
\begin{equation}\label{X from BX}
X_t=\int_0^t\zeta(t,s)\,dB^X_s,
\end{equation}
and so $X$ can be recovered in a pathwise manner from $B^X$. 

Next,  \Cref{eta lemma} implies that $\beta'_t=t^{H-1/2}\mu'_t$ is progressively measurable and satisfies 
\begin{equation}\label{beta square integrable eq}
\int_0^{ T }(\beta'_t)^2\,dt=\frac{4H^2}{c_H^2}\int_0^{ T }t^{2H-1}(\mu'_t)^2\,dt<\infty,\quad\text{$\bP$-a.s.,}
\end{equation}
 because $\mu'$ is bounded. Hence, \Cref{KadotaShepp thm} yields that the law of $B^X$ is absolutely continuous with respect to the law of $B$. Therefore, the assertion of \Cref{fBM KadotaShepp thm} follows from \eqref{W from B eq} and \eqref{X from BX}.
\qed\bigskip

The following elementary lemma was used in the proof.

\begin{lemma}\label{Hoelder lemma}If $f:[0, T ]\to\bR$ is H\"older continuous with exponent $\beta\in(0,1]$, satisfies $f(0)=0$, and $\alpha\in(0,\beta)$, then $\wt f(t):=t^{-\alpha}f(t)$  is H\"older continuous with exponent $\beta-\alpha$.
\end{lemma}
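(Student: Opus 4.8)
The plan is to prove the Hölder estimate directly from the definition, exploiting the two structural hypotheses $f(0)=0$ and $\alpha<\beta$. Write $L$ for the Hölder constant of $f$, so that $|f(s)-f(t)|\le L|s-t|^\beta$ for all $s,t\in[0,T]$. The first observation, which uses $f(0)=0$ in an essential way, is the pointwise bound $|f(t)|=|f(t)-f(0)|\le Lt^\beta$, whence $|\wt f(t)|=t^{-\alpha}|f(t)|\le Lt^{\beta-\alpha}$; since $\beta-\alpha>0$ this already shows that $\wt f$ extends continuously to $0$ with value $0$.

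Fixing $0\le s<t\le T$, the natural decomposition to work with is
$$\wt f(t)-\wt f(s)=t^{-\alpha}\big(f(t)-f(s)\big)+f(s)\big(t^{-\alpha}-s^{-\alpha}\big).$$
For the first term I would combine the Hölder bound on $f$ with the elementary inequality $\big((t-s)/t\big)^{\alpha}\le1$, valid because $0\le t-s\le t$, to obtain $t^{-\alpha}|f(t)-f(s)|\le Lt^{-\alpha}(t-s)^\beta\le L(t-s)^{\beta-\alpha}$. This term therefore poses no difficulty.

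The second term is where the singular weight near the origin must be controlled, and this is the main obstacle; I would resolve it by a dichotomy. When $t-s\le s$ (the two points are far from $0$ relative to their separation), I would apply the mean value theorem to $x\mapsto x^{-\alpha}$ to get $s^{-\alpha}-t^{-\alpha}\le \alpha s^{-\alpha-1}(t-s)$, so that, using $|f(s)|\le Ls^\beta$,
$$\big|f(s)\big(t^{-\alpha}-s^{-\alpha}\big)\big|\le L\alpha\,s^{\beta-\alpha-1}(t-s)=L\alpha\,s^{\beta-\alpha-1}(t-s)^{1-\beta+\alpha}(t-s)^{\beta-\alpha}\le L\alpha\,(t-s)^{\beta-\alpha},$$
where the last step uses $1-\beta+\alpha>0$ together with $t-s\le s$ to bound $(t-s)^{1-\beta+\alpha}\le s^{1-\beta+\alpha}$, cancelling the negative power of $s$. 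When instead $t-s>s$ (both points are close to $0$), I would discard the decomposition entirely and use only the triangle inequality with the pointwise bound of the first paragraph: $|\wt f(t)-\wt f(s)|\le Lt^{\beta-\alpha}+Ls^{\beta-\alpha}$, after which $s<t-s$ and $t<2(t-s)$ convert this into $L\big(2^{\beta-\alpha}+1\big)(t-s)^{\beta-\alpha}$.

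Combining the two regimes yields the claim with Hölder constant $C=L\max\{1+\alpha,\,1+2^{\beta-\alpha}\}$. The only delicate point is the exponent bookkeeping $1-\beta+\alpha>0$ in the near-origin estimate, which is precisely what the hypothesis $\alpha<\beta\le1$ guarantees; this is also the step that would fail if $f(0)\ne0$ or if $\alpha\ge\beta$, confirming that both hypotheses are used exactly where expected.
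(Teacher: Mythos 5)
Your proof is correct, and it starts from the same decomposition as the paper's own argument, namely $\wt f(t)-\wt f(s)=t^{-\alpha}\big(f(t)-f(s)\big)+f(s)\big(t^{-\alpha}-s^{-\alpha}\big)$ together with the identical bound $t^{-\alpha}|f(t)-f(s)|\le L(t-s)^{\beta-\alpha}$ for the first term; where you diverge is in the treatment of the singular second term. The paper needs no case distinction: it writes $|t^{-\alpha}-s^{-\alpha}|s^{\beta}=\alpha\int_s^t s^{\beta}r^{-\beta}r^{\beta-\alpha-1}\,dr$, bounds $s^{\beta}r^{-\beta}\le1$ under the integral, integrates, and uses subadditivity of $x\mapsto x^{\beta-\alpha}$ to obtain the uniform estimate $|t^{-\alpha}-s^{-\alpha}|s^{\beta}\le\frac{\alpha}{\beta-\alpha}|t-s|^{\beta-\alpha}$ (with the case $s=0$ handled separately and trivially). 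You instead argue by dichotomy: for $t-s\le s$ you use the mean value theorem for $x\mapsto x^{-\alpha}$ plus the exponent bookkeeping $1-\beta+\alpha>0$, and for $t-s>s$ you abandon the decomposition and use only the pointwise bound $|\wt f(u)|\le Lu^{\beta-\alpha}$ at both points. Both routes are elementary and complete. The paper's integral trick is shorter since one estimate covers all positions of $s$ and $t$; your version has a minor quantitative advantage, as your Hölder constant $L\max\{1+\alpha,\,1+2^{\beta-\alpha}\}\le 3L$ remains bounded as $\alpha\uparrow\beta$, whereas the paper's constant $c\big(1+\frac{\alpha}{\beta-\alpha}\big)$ blows up in that limit. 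One small remark on your closing comment: the inequality $1-\beta+\alpha>0$ is already guaranteed by $\alpha>0$ and $\beta\le1$ alone; the role of the hypothesis $\alpha<\beta$ is rather to make the target exponent $\beta-\alpha$ positive (and, in your near-origin regime, to make the pointwise bounds decay at $0$).
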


\begin{proof}Let $c\ge0$ be such that $|f(t)-f(s)|\le c|t-s|^{\beta}$ for all $s,t\in[0,1]$. Let $0\le s<t\le  T $ be given. If $s=0$, then $\wt f(s)=0$ and 
$\big|\wt f(t)-\wt f(s)\big|= t^{-\alpha}|f(t)|\le c t^{\beta-\alpha}$.
Now suppose that $s>0$. Using the inequalities $|f(t)-f(s)|\le c|t-s|^{\beta-\alpha}|t-s|^\alpha\le  c|t-s|^{\beta-\alpha}t^\alpha$ and  $|f(s)|=|f(s)-f(0)|\le cs^\beta$ yields 
\begin{align*}
\big|\wt f(t)-\wt f(s)\big|&\le t^{-\alpha}|f(t)-f(s)|+(t^{-\alpha}-s^{-\alpha})|f(s)|\le c|t-s|^{\beta-\alpha}+c|t^{-\alpha}-s^{-\alpha}|s^\beta.
\end{align*} 
For the rightmost term, we have 
\begin{align*}
|t^{-\alpha}-s^{-\alpha}|s^\beta=\alpha\int_s^ts^\beta r^{-\beta}r^{\beta-\alpha-1}\,dr\le\alpha\int_s^tr^{\beta-\alpha-1}\,dr=\frac{\alpha}{\beta-\alpha}(t^{\beta-\alpha}-s^{\beta-\alpha})\le \frac{\alpha}{\beta-\alpha}|t-s|^{\beta-\alpha}.\end{align*}
This completes the proof.\end{proof}

\subsection{Proof of \Cref{fBM Yershov theorem}}

We may once again assume without loss of generality that $x_0=0$; otherwise we replace $X_t$ with $X_t-x_0$ and $b$ with $b(\cdot+x_0)$. Moreover, as in \eqref{x0=0 eq}, we may assume without loss of generality that $b(X_0)=b(x_0)=b(0)=0$.

By  the one-to-one correspondence between $B^X$ and $X$, \eqref{BX eq} and \eqref{X from BX}, the equivalence of the laws of $X$ and $x_0+W^H$ will follow if we can show the equivalence of the laws of $B^X$ and $B$. Let $\xi_t:=b(X_t)$ and $\wt\xi_t:=b(W^H)$. Then $t\mapsto\xi_t$ and $t\mapsto\wt\xi_t$ are both $\bP$-a.s.~bounded. Moreover, for  $H>1/2$, we use the fact that the sample paths of $W^H$, and in turn those of $X$, are $\bP$-a.s.~H\"older continuous with exponent $\beta$ for every $\beta<H$; see, e.g., \cite[Section 1.15]{Mishura}. Therefore, the trajectories $t\mapsto b(X_t)=\xi_t$ and $t\mapsto b(W^H_t)=\wt\xi_t$ are H\"older continuous with exponent $\alpha\beta$. By assumption, $\alpha H>2H-1$, and hence there exists $\beta<H$ such that $\alpha\beta>2H-1$ as well. Thus, both $\xi$ and $\wt\xi$ satisfy the assumptions of  \Cref{fBM KadotaShepp thm}. Thus, we may apply \Cref{eta lemma}. It yields that both 
$$\mu_t=c_1\int_0^t(t-s)^{1/2-H}s^{1/2-H}\xi_s\,ds\quad\text{and}\quad \wt\mu_t=c_1\int_0^t(t-s)^{1/2-H}s^{1/2-H}\wt\xi_s\,ds
$$ 
are absolutely continuous with bounded and progressively measurable derivatives $\mu_t'$ and $\wt\mu_t'$. Thus, if we define $\beta$ and $\wt\beta$ via \eqref{beta def}, then $\int_0^T(\beta'_t)^2\,dt$ and $\int_0^T(\wt\beta'_t)^2\,dt$ are both $\bP$-a.s.~finite by 
\eqref{beta square integrable eq}.

We have seen in \eqref{BX eq} that $B^X_t=B_t+\beta_t$. Using in addition \eqref{beta def} and \eqref{X from BX}, we see that
\begin{equation*}
B^X_t=B_t+\int_0^t\gamma(s,B^X)\,ds
\end{equation*}
where 
\begin{equation*}
\gamma(t,B^X)=\frac{2H c_1}{c_H}t^{H-1/2}\frac{d}{dt}\int_0^t\frac{s^{1/2-H}}{(t-s)^{H-1/2}}b\bigg(\int_0^s\zeta(s,r)\,dB^X_r\bigg)\,ds=\beta'_t.
\end{equation*}
Hence, $B^X$ satisfies a stochastic integral equation  of the form \eqref{SDE}. Moreover, $\gamma(t,B)=\wt\beta'_t$. Hence, the conditions of \Cref{Yershov thm} are satisfied, and so the laws of $B$ and $B^X$ are equivalent.  
\qed

\begin{remark}\label{density remark 2} As mentioned in \Cref{density remark 1}, the proof of \Cref{fBM Yershov theorem} yields a formula for the density of the law of $(X_t)_{0\le t\le T}$ with respect to the one of $(W^H_t)_{0\le t\le T}$ in the spirit of \cite{Norrosetal1999}. It follows from the preceding proof, \eqref{W from B eq}, and  \eqref{X from BX} that this density must be the same as the one of the law of $B^X$ with respect to the law of $B$. In the proof of  \cite[Theorem 7.7]{LiptserShiryaevI}, it is shown that the latter is simply given by the standard Girsanov formula, i.e.,
$$\exp\bigg(\int_0^T\beta'_t\,dB_t-\frac12\int_0^T(\beta'_t)^2\,dt\bigg). 
$$
Using the formulas in the proof of \Cref{fBM KadotaShepp thm}, one can easily give several equivalent representations of this density.
\end{remark}

\bibliographystyle{plain}
	\bibliography{CTbook}
	\end{document}